\newtheorem{lemma}{Lemma}[section]
\newtheorem{theorem}[lemma]{Theorem}
\newtheorem{proposition}[lemma]{Proposition}
\newtheorem{result}{Theorem}
\theoremstyle{definition}
\newcommand{\abs}[1]{\ensuremath{\left| #1 \right|}}
\newcommand{\op}{\operatorname}
\newcommand{\ce}[2]{\operatorname{C}_{#1}(#2)}
\newcommand{\no}[2]{\operatorname{N}_{#1}(#2)}
\newcommand{\ze}[1]{\operatorname{Z}(#1)}
\newcommand{\rad}[2]{\op{O}_{#1}(#2)}
\newcommand{\rup}[2]{\op{O}^{#1}(#2)}
\newcommand{\syl}[2]{\op{Syl}_{#1}\left(#2\right)}
\newcommand{\groupgen}[1]{\langle #1 \rangle}
\newcommand{\irr}{\operatorname{Irr}}
\newcommand{\gal}{\operatorname{Gal}}
\newcommand{\smid}{\! \mid \!}
\DeclareMathSymbol{\shortminus}{\mathbin}{AMSa}{"39}
\renewcommand{\phi}{\varphi}
\renewcommand{\epsilon}{\varepsilon}
\title{On the degrees of irreducible characters fixed by some field automorphism, $p$-solvable groups}
\date{\today}
\author{Nicola Grittini}
\begin{document}

\maketitle

\begin{abstract}
It is known that, if all the real-valued irreducible characters of a finite group have odd degree, then the group has normal Sylow $2$-subgroup.

We generalize this result for Sylow $p$-subgroups, for any prime number $p$, while assuming the group to be $p$-solvable. In particular, it is proved that a $p$-solvable group has a normal Sylow $p$-subgroup if $p$ does not divide the degree of any irreducible character of the group fixed by a field automorphism of order $p$.
\end{abstract}

\section{Introduction}

The search for results linking the normal structure of a finite group and the degrees of its irreducible characters is a classical problem in character theory. The best known result linking the two properties is the celebrated Ito-Michler Theorem, and many variants of this theorem have already been found.

In \cite{Dolfi-Navarro-Tiep:Primes_dividing} a variant of Ito-Michler Theorem is studied, involving real-valued characters. In fact, \cite[Theorem A]{Dolfi-Navarro-Tiep:Primes_dividing} states that, if the prime 2 does not divide the degree of any real-valued irreducible character of a finite group $G$, then $G$ has a normal Sylow 2-subgroup.

If we look at the real-valued characters as the characters fixed by the complex conjugation, it is possible to generalize this theorem for any prime $p$, at least for $p$-solvable groups.

\begin{result}
\label{result:p-solvable_groups}
Let $G$ be a finite $p$-solvable group, for some prime $p$, and let $\sigma \in \gal(\mathbb{Q}_{\abs{G}} / \mathbb{Q})$ of order $p$. Suppose that $p$ does not divide the degree of any $\sigma$-invariant irreducible character of $G$. Then, $G$ has a normal Sylow $p$-subgroup.
\end{result}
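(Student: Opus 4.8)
The plan is an induction on $\abs{G}$ through a minimal counterexample, the goal being to exhibit in such a counterexample a $\sigma$-invariant irreducible character of degree divisible by $p$. (It is convenient to observe that the hypothesis is equivalent to asking that every $\sigma$-invariant $p$-special irreducible character of $G$ be linear: the Gajendragadkar factorization $\chi=\chi_{p}\chi_{p'}$ of an irreducible character into its $p$-special and $p'$-special parts is unique and commutes with Galois action, and $\chi_{p}(1)$ is a $p$-power while $\chi_{p'}(1)$ is prime to $p$, so a $\sigma$-invariant $\chi$ with $p\mid\chi(1)$ has $\chi_{p}$ nonlinear and $\sigma$-invariant, while conversely any nonlinear $\sigma$-invariant $p$-special character is itself such a $\chi$.)

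\textbf{Reductions.} If $\sigma$ fixes every irreducible character of $G$ then $p$ divides no character degree and $G$ has a normal abelian Sylow $p$-subgroup by the Ito-Michler theorem; so assume otherwise and let $G$ be a counterexample of least order, $P\in\syl{p}{G}$. For $1\neq K\trianglelefteq G$, the quotient $G/K$ inherits the hypothesis, and has a normal Sylow $p$-subgroup: by induction if $\sigma$ still moves some irreducible character of $G/K$ (its restriction to $\mathbb{Q}_{\abs{G/K}}$ then having order $p$), and by Ito-Michler otherwise. If $\rad{p}{G}\neq 1$, pick a minimal normal $V\leq\rad{p}{G}$; then $V\leq\rad{p}{G}\leq P$, and $PV/V$ being the normal Sylow $p$-subgroup of $G/V$ gives $P=PV\trianglelefteq G$, a contradiction. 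Hence $\rad{p}{G}=1$; solvability then forces $N:=\rad{p'}{G}\neq 1$, with $\ce{G}{N}\leq N$. Finally $P$ does not centralize $N$, since otherwise $PN=N\times P$, so $P=\rad{p}{PN}$ is characteristic in the normal subgroup $PN$ of $G$ and $P\trianglelefteq G$.

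\textbf{The core.} We look for the required character by Clifford theory over $N=\rad{p'}{G}$. As $P$ acts nontrivially on $N$ and $\gcd(\abs{P},\abs{N})=1$, $P$ acts nontrivially on $\irr(N)$, and from this one extracts a $\theta\in\irr(N)$ whose $G$-orbit has size divisible by $p$; then every $\chi\in\irr(G\mid\theta)$ has degree divisible by $p$. The subtlety is $\sigma$-equivariance: if $\chi^{\sigma}=\chi$ then $\theta$ and $\theta^{\sigma}$ are both constituents of $\chi|_{N}$, hence $G$-conjugate, so the $G$-orbit of $\theta$ must be $\sigma$-stable. One therefore needs (i) a $\sigma$-stable $G$-orbit in $\irr(N)$ of $p$-divisible size, and (ii) a way to carry $\sigma$-invariance down the Clifford correspondence $\irr(G\mid\theta)\leftrightarrow\irr(I_{G}(\theta)\mid\theta)$. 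For (i) one uses that $\sigma$ commutes with the $G$-action on $\irr(N)$ and that $I_{G}(\theta^{\sigma})=I_{G}(\theta)$, so $\sigma$ permutes the $G$-orbits preserving the $p$-part of their sizes; that it cannot move all of the $p$-divisible orbits should follow from the $\groupgen{G,\sigma}$-equivariance of Brauer's permutation lemma together with that of the Glauberman correspondence $\irr_{P}(N)\leftrightarrow\irr(\ce{N}{P})$, which governs the $\sigma$-action on the $p'$-part of the orbits; the standard chief-series reduction of solvable character theory, which brings one to the case where $N$ is elementary abelian and $\sigma$ acts on $\irr(N)$ as a power automorphism, makes the required orbit visible.

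\textbf{Main obstacle.} Step (ii), and with it the reduction that presumably constitutes the "more general result" referred to in the introduction, is where the real difficulty lies. When $I_{G}(\theta)<G$ one wants to induct on the inertia group, but $(I_{G}(\theta),\theta,\sigma)$ need not satisfy the theorem's hypotheses as stated, so the descent has to be carried out at the level of the character triple $(G,N,\theta)$: one replaces it by an isomorphic triple in which $\theta$ is linear and faithful and $N$ is central, tracking $\sigma$ and $p$-speciality along the isomorphism, and re-runs the argument. When $\theta$ is already $G$-invariant — the stable case, with no proper inertia subgroup available — a separate Ito-Michler-type argument is needed: using again that $P$ fails to centralize $N$, one produces a $G$-invariant $\theta$ whose canonical extension, multiplied by a suitable $p$-special character of a proper section of $G$, is irreducible, nonlinear, $p$-special and $\sigma$-invariant, contradicting the reformulated hypothesis. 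Assembling the orbit input of (i) with this triple-by-triple descent, and disposing of the stable configuration cleanly, is the heart of the proof.
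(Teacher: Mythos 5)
Your reductions are fine (minimal counterexample, $\rad{p}{G}=1$, $N=\rad{p'}{G}=\fit{G}\neq 1$, $P$ not centralizing $N$), but from that point on the proposal is a program rather than a proof: both of the steps you yourself flag --- (i) producing a suitable $\sigma$-compatible character of $N$ and (ii) transporting $\sigma$-invariance up to $G$ --- are exactly where all the work lies, and they are left as ``should follow'' and ``is the heart of the proof.'' Moreover, the route you sketch for (ii), a descent through inertia groups via character triple isomorphisms ``tracking $\sigma$ along the isomorphism,'' is itself a genuine obstacle rather than a routine step: ordinary character triple isomorphisms do not in general commute with Galois action, so making that descent $\sigma$-equivariant would require additional machinery that you neither supply nor cite. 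Your step (i) is also aimed at the wrong target: a $\sigma$-stable $G$-orbit of $p$-divisible size only gives $\theta^{\sigma}=\theta^{g}$ for some $g\in G$, which by itself does not produce a $\sigma$-\emph{invariant} constituent of $\theta^{G}$; one needs a finer compatibility together with a mechanism for choosing a canonical character over $\theta$.

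For comparison, the paper proves the solvable statement as a quick corollary of the general Theorem~\ref{result:general_case}, whose proof in the abelian chief factor case runs in the opposite direction to yours: assuming $\ce{M}{P}=1$, a Frobenius-complement argument applied to $\groupgen{\sigma}\times\ze{P}$ acting on $V=\irr(M)$ (Lemma~\ref{lemma:action_on_minimal}, via the fact that a noncyclic abelian group cannot be a Frobenius complement) yields $\lambda\in\irr(M)$ with $\lambda^{\sigma}=\lambda^{g}$ for a specific $g\in P$ and with $\abs{P:\op{I}_P(\lambda)}$ minimal; the uniqueness of the canonical (determinantal-order preserving) extension then gives a $\sigma$-invariant $\eta\in\irr(PM\mid\lambda)$ of degree $\abs{P:\op{I}_P(\lambda)}$ (Lemma~\ref{lemma:normal_coprime}), and a mod-$p$ orbit count on the constituents of $\eta^{G}$ (Proposition~\ref{proposition:invariant_in_subgroup}, using $p\nmid\abs{G:PM}$) produces a $\sigma$-invariant $\chi\in\irr(G)$ whose degree is divisible by $\abs{P:\op{I}_P(\lambda)}$; the hypothesis then forces $\lambda$ to be $P$-invariant and Glauberman correspondence gives $\ce{M}{P}>1$, after which the solvable case follows by a short induction. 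None of this apparatus --- the Frobenius-kernel lemma, the canonical-extension uniqueness trick, and the $p$-adic counting of $\sigma$-orbits of constituents --- appears in your proposal, and without some substitute for it the argument does not go through.
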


Theorem~\ref{result:p-solvable_groups} generalizes \cite[Theorem A]{Dolfi-Navarro-Tiep:Primes_dividing} for $p$-solvable groups, since \cite[Theorem A]{Dolfi-Navarro-Tiep:Primes_dividing} follows directly from Theorem~\ref{result:p-solvable_groups} if we take the complex conjugation as $\sigma$. In fact, the proof of Theorem~\ref{result:p-solvable_groups} is sometimes similar to the proof of \cite[Theorem A]{Dolfi-Navarro-Tiep:Primes_dividing}; however, we also use some original techniques.

If we assume the group $G$ not to be $p$-solvable, Theorem~\ref{result:p-solvable_groups} is no longer valid, as we can see if we consider $\op{PSL}(2,8)$ or $\op{PSL}(2,32)$ and $p=3$, or $\op{Sz}(32)$ and $p=5$. However, in this situation, a weaker result can still be found.

\begin{result}
\label{result:general_case}
Let $G$ be a finite group, for some prime $p$, let $\sigma \in \gal(\mathbb{Q}_{\abs{G}} / \mathbb{Q})$ of order $p$ and let $P$ be a Sylow $p$-subgroup of $G$. Suppose that $p$ does not divide the degree of any $\sigma$-invariant irreducible character of $G$. Then, $\no{G}{P}$ intersects every chief factor of $G$ non-trivially.
\end{result}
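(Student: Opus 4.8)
The natural approach is induction on $\abs{G}$, the statement to be proved being that $\no{G}{P}\cap K\not\subseteq L$ for every chief factor $K/L$ of $G$ (here $\no{G}{P}$ is the normaliser of $P$ in $G$). If $p\mid\abs{K/L}$ this is automatic: $P\cap K$ is a Sylow $p$-subgroup of $K$, so $(P\cap K)L/L$ is a nontrivial $p$-subgroup of $K/L$, and $P\cap K\subseteq\no{G}{P}\cap K$. So assume $p\nmid\abs{K/L}$ and pass to $\overline{G}=G/L$. The hypothesis is inherited by $\overline{G}$, with $\sigma$ replaced by its restriction $\overline{\sigma}$ to $\mathbb{Q}_{\abs{\overline{G}}}$: if $\overline{\sigma}=1$ then every irreducible character of $\overline{G}$ is $\overline{\sigma}$-invariant, so by the Ito--Michler theorem $\overline{G}$ has a normal Sylow $p$-subgroup and we are done; otherwise $\overline{\sigma}$ has order $p$ and induction applies. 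A Frattini argument identifies $\no{\overline{G}}{PL/L}$ with $\no{G}{P}L/L$, which transfers the conclusion for $\overline{G}$ back to $G$. This reduces everything to the case $L=1$, $K$ a minimal normal $p'$-subgroup of $G$; there $\no{G}{P}\cap K=\ce{K}{P}$ (if $x\in K$ normalises $P$ then $[x,P]\subseteq P\cap K=1$), so it remains to prove $\ce{K}{P}\neq 1$. Suppose $\ce{K}{P}=1$.

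Since $\abs{K}$ is prime to $p$, every conjugacy class of $K$ has $p'$-size, so every $P$-invariant class of $K$ meets $\ce{K}{P}=1$ and is trivial; by Brauer's permutation lemma $1_K$ is the only $P$-invariant character in $\irr(K)$. Hence for every nontrivial $\lambda\in\irr(K)$ we have $P\cap I_G(\lambda)<P$, so $p$ divides $[P:P\cap I_G(\lambda)]$, which divides $[G:I_G(\lambda)]$, which divides $\chi(1)$ for every $\chi\in\irr(G)$ lying over $\lambda$. Thus every $\chi\in\irr(G)$ with $K\not\subseteq\ker\chi$ has degree divisible by $p$, and the hypothesis forces every $\sigma$-invariant irreducible character of $G$ to factor through $G/K$. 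Writing $\irr_{\sigma}$ for the set of $\sigma$-invariant irreducible characters, this says $\irr_{\sigma}(G)=\irr_{\sigma}(G/K)$. I will contradict this by constructing a $\sigma$-invariant irreducible character of $G$ of degree divisible by $p$ — it must lie over a nontrivial member of $\irr(K)$.

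The group $G\times\langle\sigma\rangle$ acts on the character table of $K$, through conjugation and the Galois action, and these actions commute; by Brauer's permutation lemma the permutation characters afforded by $\irr(K)$ and by the classes of $K$ agree, so the $\langle\sigma\rangle$-sets of $G$-orbits on $\irr(K)$ and of $G$-classes inside $K$ are isomorphic. It therefore suffices to exhibit a nontrivial $\sigma$-fixed $G$-class inside $K$, for this yields a nontrivial $G$-orbit $\mathcal{O}$ on $\irr(K)$ with $\mathcal{O}^{\sigma}=\mathcal{O}$. If $\sigma$ acts trivially on $\mathbb{Q}_r$ for some prime $r\mid\abs{K}$, an element of order $r$ works. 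Otherwise every prime divisor of $\abs{K}$ is $\equiv 1\pmod p$, so $\abs{K}$ is odd and, by Feit--Thompson, $K$ is elementary abelian of some exponent $r$ with $p\mid r-1$, while $\sigma$ acts on $\irr(K)\cong\mathbb{F}_r^{n}$ as multiplication by a scalar $t$ of order $p$ in $\mathbb{F}_r^{\times}$; since $\ce{K}{P}=1$ means $P$ has no nonzero fixed vector on $\irr(K)$, some $p$-element of $P$ has an eigenvalue of order exactly $p$ there — necessarily a power of $t$ — so a suitable power of that element carries an eigenvector $v\neq 0$ to $tv$, making the $G$-orbit of $v$ $\sigma$-stable. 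In either case we obtain a nontrivial $\lambda\in\irr(K)$ with $\lambda^{\sigma}=\lambda^{g}$ for some $g\in G$. It remains to promote this to a $\sigma$-fixed irreducible character of $G$ over $\lambda$: since $\abs{K}$ is prime to $p$ one aims to extend $\lambda$ to its inertia group by the canonical — hence Galois-compatible — extension and to carry out the Clifford correspondence $\sigma$-equivariantly, producing $\chi\in\irr(G)$ over $\lambda$ with $\chi^{\sigma}=\chi$; as $\lambda\neq 1_K$, such $\chi$ has degree divisible by $p$, the desired contradiction.

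The main obstacle is precisely this last step — passing from a $\sigma$-stable $G$-orbit inside $\irr(K)$ to a single $\sigma$-fixed $\chi\in\irr(G)$ — which requires carrying the Galois action of the prime-order group $\langle\sigma\rangle$ through Clifford theory relative to the $p'$-group $K$. Concretely, one either arranges that the inertia quotient $I_G(\lambda)/K$ is a $p$-group, so that Isaacs's coprime-extension theory provides a canonical, automatically $\sigma$-invariant extension of $\lambda$, or one works with the attached factor set and exploits that $p\nmid\abs{K}$ while $\sigma$ has order $p$ to kill the resulting cohomological obstruction; it is here, and not in the reductions or the counting, that the work lies. Everything else is routine: the reductions use a Frattini argument, the Ito--Michler theorem and Clifford's theorem, and the counting uses only Brauer's permutation lemma, together with the Feit--Thompson theorem to dispose of the residual elementary abelian case.
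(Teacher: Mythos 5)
Your reductions, the identification $\no{G}{P}\cap K=\ce{K}{P}$, the observation that $\ce{K}{P}=1$ forces every $\sigma$-invariant character of $G$ to contain $K$ in its kernel, and the production of a nontrivial $\lambda\in\irr(K)$ with $\lambda^{\sigma}=\lambda^{g}$ for some $g\in G$ are all sound (your eigenvalue argument in the elementary abelian case in fact recovers, more simply, what the paper proves via a Frobenius-group lemma, and even puts $g\in P$). The problem is that the step you yourself flag as ``where the work lies'' is exactly the heart of the theorem, and it is not proved; nor do your two suggested repairs work as stated. You cannot ``extend $\lambda$ to its inertia group by the canonical extension'': $\op{I}_G(\lambda)/K$ is not coprime to $\abs{K}$ and is not a $p$-group in general, and nothing in the setup lets you arrange either. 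If instead you extend inside $PK$ (where the coprime canonical extension does exist, and where $\lambda^{\sigma}=\lambda^{g}$ with $g\in P$ gives a $\sigma$-invariant $\eta\in\irr(PK\mid\lambda)$ of degree $\abs{P:\op{I}_P(\lambda)}$, as in the paper's Lemma~2.4), the obvious orbit-counting on the constituents of $\eta^{G}$ fails: the total degree $\abs{G:PK}\,\eta(1)$ is divisible by $p$ because $\eta(1)=\abs{P:\op{I}_P(\lambda)}$ is a positive power of $p$, so the $p$-power orbit sizes give no fixed constituent. The paper's fix is precisely the ingredient you are missing: choose $\lambda$ with $\abs{P:\op{I}_P(\lambda)}$ \emph{minimal} among nontrivial characters of $K$ (its Lemma~2.2), which forces $\eta(1)$ to divide the degree of every constituent of $\eta^{G}$, and then its Proposition~2.3 (dividing the degree equation through by $\eta(1)$) produces a $\sigma$-fixed constituent. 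Your construction of $\lambda$ carries no such minimality, so the climb from $PK$ to $G$ is genuinely blocked.

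The gap is even wider in the nonabelian branch. There your $\lambda$ satisfies only $\lambda^{\sigma}=\lambda^{g}$ with $g\in G$ arbitrary (it comes from matching $\sigma$-stable $G$-orbits via Brauer's lemma), so the coprime-extension device inside $PK$ is not even available, and no cohomological ``obstruction-killing'' is offered beyond a hope. The paper avoids this entirely by using the Navarro--Tiep theorem (CFSG) to get a \emph{genuinely} $\sigma$-invariant $\psi\in\irr(K)$, namely a nontrivial rational character of one simple factor tensored with principal characters; it then extends $\psi$ canonically to $P_0K$ with $P_0\in\syl{p}{\op{I}_G(\psi)}$, applies the counting proposition inside $\op{I}_G(\psi)$, and induces via the Clifford correspondence. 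Your involution trick is an attractive attempt to bypass CFSG, but by itself it only yields $\sigma$-stability of an orbit, which is strictly weaker than what the subsequent Clifford-theoretic step needs; as written, the proposal establishes the easy parts of the argument and leaves the essential construction unproved. (Two minor points: deducing that $1_K$ is the only $P$-invariant character from the fixed-class count needs the coprime-action/Glauberman statement rather than Brauer's lemma alone when $P$ is not cyclic; and the isomorphism of $\groupgen{\sigma}$-sets of orbits should be justified by averaging Brauer's lemma over $G$ and using that $\groupgen{\sigma}$ is cyclic --- both are fixable, unlike the main gap.)
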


Despite our restriction to the $p$-solvable case, the proof of Theorem~\ref{result:p-solvable_groups} requires some non-trivial research on non-abelian simple groups. In particular, we need to answer to the following question. Suppose that $S$ is a non-abelian simple group, let $p$ be a prime number, let $S < G \leq \op{Aut}(S)$ such that $G/S$ is a $p$-group (here, we conveniently identify $S$ with $\op{Inn}(S)$) and let $\sigma \in \gal(\mathbb{Q}_{\abs{G}} / \mathbb{Q})$ be of order $p$. Is it true that $G$ has a $\sigma$-invariant irreducible character of degree divisible by $p$?

In order to prove Theorem~\ref{result:p-solvable_groups}, we only need to answer to this question when $p \nmid \abs{S}$. In particular, we can assume $p$ to be odd and $S$ to be a simple group of Lie type. In this setting, however, the result we are able to find is stronger then what we need.

\begin{result}
\label{result:simple_groups}
Let $S$ be a non-abelian simple group of Lie type, let $p$ be an odd prime number, let $S < G \leq \op{Aut}(S)$ such that $G/S$ is a $p$-group and let $\sigma \in \gal(\mathbb{Q}_{\abs{G}} / \mathbb{Q})$ be of order $p$. In the following situations, $G$ has a $\sigma$-invariant irreducible character of degree divisible by $p$.
\begin{itemize}
\item[a)] $S$ is of type $A_n(q)$, and $p \nmid \abs{\tilde{S} : \tilde{S} \cap G}$ for some $S < \tilde{S} \leq \op{Aut}(S)$ isomorphic to $\op{PGL}_{n+1}(q)$ and such that $G \nleq \tilde{S}$;
\item[b)] $S$ is of type ${}^2A_n(q^2)$, and $p \nmid \abs{\tilde{S} : \tilde{S} \cap G}$ for some $S < \tilde{S} \leq \op{Aut}(S)$ isomorphic to $\op{PGU}_{n+1}(q)$ and such that $G \nleq \tilde{S}$;
\item[c)] $S$ is of type $D_n(q)$, and $p \neq 3$ if $n=4$;
\item[d)] $S$ is of type $E_6(q)$ or ${}^2E_6(q^2)$, and $p \neq 3$;
\item[e)] $S$ is of type $B_n(q)$, $C_n(q)$, $E_7(q)$, $E_8(q)$, $F_4(q)$, $G_2(q)$, ${}^2D_n(q^2)$, ${}^3D_4(q^3)$, ${}^2B_2(2^{2n+1})$, ${}^2F_4(2^{2n+1})$, ${}^2G_2(3^{2n+1})$.
\end{itemize}
In particular, the above conditions are always verified if $p \nmid \abs{S}$.
\end{result}

The list of Theorem~\ref{result:simple_groups} is by no means exhaustive, it only includes the cases we can prove by using the techniques presented in this paper. For instance, a $\sigma$-invariant irreducible character of degree divisible by $p$ exists for sure also when $S$ is defined over a field of characteristic $p$, since the Steinberg character extends to $\op{Aut}(S)$. Moreover, the results in \cite{Dolfi-Navarro-Tiep:Primes_dividing} \textit{almost} provide a proof of the existence of such character (for any $\sigma$) when $p=2$. Indeed, it is possible that a $\sigma$-invariant irreducible character of degree divisible by $p$ exists for any choice of $p$, $G$, $S$ and $\sigma$, provided that $p \mid \abs{G:S}$.

\section{Preliminary results}

We shall prove some preliminary results first. We begin with some results about $p$-groups acting on vector spaces. We will then apply them to the action on characters of elementary abelian groups. 

\begin{lemma}
\label{lemma:action_on_minimal}
Let $p$ be a prime number and let $P > 1$ be a $p$-group. Let $V$ be a finite vector space which is also an irreducible $P$-module, such that $\ce{V}{P}=0$. Let $\sigma \in \op{Aut}(V)$ of order $p$ such that, for every $v \in V$ and every $x \in P$, $(v^x)^{\sigma} = (v^{\sigma})^x$. Then, there exists $g \in P$ such that $v^{\sigma}=v^g$ for every $v \in V$.
\end{lemma}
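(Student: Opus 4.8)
The plan is to recognize $\sigma$ as an element of the endomorphism \emph{field} of the irreducible module $V$, and then to exploit the elementary fact that the centre of a nontrivial $p$-group is a nontrivial $p$-group. Let $k$ be the field over which $V$ is a vector space (equivalently, its prime field, since an additive bijection of $V$ is automatically linear over the prime field), and regard $V$ as a module over the group algebra $k[P]$. The hypothesis $(v^x)^{\sigma}=(v^{\sigma})^x$ says precisely that $\sigma$, being a $k$-linear bijection of $V$, is a unit of $E:=\op{End}_{k[P]}(V)$. Since $V$ is irreducible, Schur's Lemma shows $E$ is a finite division ring, hence a finite field by Wedderburn's theorem; thus $E^{\times}$ is cyclic, and $\sigma$ is an element of $E^{\times}$ of order $p$. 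It therefore suffices to show that $\sigma$, viewed inside $\op{GL}(V)$, lies in the image $\overline{P}$ of $P$: once we know $\sigma\in\overline{P}$, it is realised by some $g\in P$ and then $v^{\sigma}=v^{g}$ for every $v\in V$, as desired.

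Next I would bring in the hypothesis $\ce{V}{P}=0$. Since $V\neq 0$ and $P$ fixes no nonzero vector, $P$ acts nontrivially, so $\overline{P}\neq 1$, and hence $\ze{\overline{P}}\neq 1$ because $\overline{P}$ is a nontrivial $p$-group. Any element of $\ze{\overline{P}}$ commutes with the whole action of $P$ on $V$ and is $k$-linear, so it is an $\op{End}_{k[P]}(V)$-endomorphism and, being invertible, lies in $E^{\times}$. Consequently $\ze{\overline{P}}$ is a \emph{nontrivial $p$-subgroup} of the cyclic group $E^{\times}$.

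Finally, a cyclic group has at most one subgroup of each order, so $E^{\times}$ has a unique subgroup of order $p$; since $p$ divides $|\ze{\overline{P}}|$, this subgroup is contained in $\ze{\overline{P}}\leq\overline{P}$. On the other hand $\langle\sigma\rangle$ is also a subgroup of order $p$ of $E^{\times}$, hence equals it. Therefore $\sigma\in\ze{\overline{P}}\leq\overline{P}$, which completes the argument.

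I do not expect a genuine obstacle here: once one observes that $\op{End}_{k[P]}(V)$ is a field and that $\ze{\overline{P}}\neq 1$, the proof is essentially forced. The only point demanding a little care is the bookkeeping around a possibly non-faithful action — i.e.\ consistently passing to $\overline{P}$ and checking that ``commuting with the $P$-action on $V$'', ``centralising $\overline{P}$ in $\op{GL}(V)$'', and ``being a $k[P]$-module endomorphism of $V$'' all say the same thing.
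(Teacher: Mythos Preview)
Your argument is correct and takes a genuinely different route from the paper. The paper works by a Frobenius-group argument: after passing to $\overline{P}=P/\ce{P}{V}$, it shows that $Z=\ze{\overline{P}}$ acts fixed-point-freely on $V\setminus\{0\}$ (using irreducibility), forms the abelian $p$-group $H=\langle\sigma\rangle\times Z$ acting on $V$, observes that $H$ is non-cyclic and hence cannot be a Frobenius complement, and from the resulting nontrivial stabiliser extracts $g\in Z$ with $w^{\sigma}=w^{g}$ for some $w\neq 0$; this relation is then propagated to all of $V$ via $V=\langle w^{P}\rangle$. You instead recognise that Schur's lemma and Wedderburn's little theorem force $E^{\times}$ to be cyclic, so that $\langle\sigma\rangle$ and the nontrivial $p$-group $\ze{\overline{P}}$, both sitting inside $E^{\times}$, must share its unique subgroup of order $p$. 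Both approaches locate $g$ in $\ze{\overline{P}}$; yours makes the structural reason transparent at the cost of invoking Wedderburn, while the paper's stays with elementary facts about Frobenius complements. One small caveat: the parenthetical suggesting that $k$ may ``equivalently'' be taken to be the prime field is not safe in general, since irreducibility of $V$ over $k[P]$ need not survive restriction to the prime subfield (for instance $P=C_2$ acting by $-1$ on $V=\mathbb{F}_9$); but since your actual argument uses the given field $k$ throughout, this does not affect its correctness.
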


\begin{proof}
Notice that, since $\ce{V}{P}=0$, then $\ce{P}{V} < P$ and, thus, if we replace $P$ with $\bar{P}=P/\ce{P}{V}$ we may assume $\ce{P}{V}=1$. Moreover, since $V$ is an irreducible $P$-module, then $V=\groupgen{v^P}$ for every $0 \neq v \in V$.

Let $z \in Z = \ze{P}$ and suppose $z \in \ce{P}{v}$ for some non-trivial $v \in V$; then, for every $y \in P$, $(v^y)^z=(v^z)^y=v^y$. Since by hypothesis $V=\groupgen{v^P}$, it follows that $z$ centralizes all the generators of $V$ and, thus, $z \in \ce{P}{V} = 1$. Therefore, $Z \cap \ce{P}{v} = 1$ for every $0 \neq v \in V$.

Let now $H = \groupgen{\sigma} \times Z$ and consider the group $\Gamma = H \ltimes V$. Since $H$ is an abelian $p$-group and it is not cyclic, it follows from \cite[Corollary~6.10]{Isaacs:Finite_Group_Theory} that $\Gamma$ is not a Frobenius group. As a consequence of \cite[Theorem~6.4]{Isaacs:Finite_Group_Theory}, there exists $w \in V \setminus \{0\}$ such that $D = \ce{H}{w} \neq 1$. However, we know from the previous paragraph that $Z \cap D = 1$; thus, there exists $z \times \sigma^{-m} \in D$ for some $m$ coprime with $p$ and some $z \in Z$. Now, let $a \in \mathbb{Z}$ such that $am=bp + 1$ for some $b \in \mathbb{Z}$, then $(z \times \sigma^{-m})^a = z^a \times \sigma^{-1} \in D$ and, thus, $w^{z^a} = w^{\sigma}$. We may take $g = z^a \in \ze{P}$.

Let $v \in V$. Since, by hypothesis, $V=\groupgen{w^P}$, we can write $v = \sum_{i=1}^k n_i w^{p_i}$ for some $p_i \in P$ and some $n_i \in \mathbb{Z}$. Now we have that 
$$ v^{\sigma} = \sum_{i=1}^k (n_i w^{p_i})^{\sigma} = \sum_{i=1}^k (n_i w^{\sigma})^{p_i} = \sum_{i=1}^k (n_i w^g)^{p_i} = \sum_{i=1}^k (n_i w^{p_i})^g = v^g $$
and the thesis follows.
\end{proof}

\begin{lemma}
\label{lemma:action_on_space}
Let $p$ be a prime number and let $P > 1$ be a $p$-group. Let $V$ be a finite vector space which is also a $P$-module, and suppose that $\ce{V}{P}=0$. Let $\sigma \in \op{Aut}{V}$ of order $p$ such that, for every $v \in V$ and every $x \in P$, $v^{\sigma} \in \groupgen{v}$ and $(v^x)^{\sigma} = (v^{\sigma})^x$. Then, there exists $g \in P$ and $0 \neq v \in V$ such that $v^{\sigma}=v^g$ and $\abs{P:\ce{P}{v}} = \min\{ \abs{P:\ce{P}{w}} \mid 0 \neq w \in V \}$.
\end{lemma}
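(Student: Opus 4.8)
The plan is to reduce the statement to Lemma~\ref{lemma:action_on_minimal} by splitting $V$ into irreducible pieces and locating a minimal‑index vector inside one of them. Since $\ce{V}{P}=0$ and $P\neq 1$, the characteristic $r$ of $V$ must differ from $p$ (a nontrivial $p$‑group acting on a nonzero $\mathbb{F}_p$‑space has a nonzero fixed point), so $r\nmid\abs{P}$. I would also record at the outset that $\sigma$ is a scalar: the hypothesis $v^{\sigma}\in\groupgen{v}$ for all $v$ says that $\sigma$ stabilizes every $1$‑dimensional subspace of $V$, and an automorphism with this property is multiplication by a fixed $\lambda\in\mathbb{F}_r^{\times}$ (classical when $\dim V\geq 2$, trivial when $\dim V=1$); as $\sigma$ has order $p$, so does $\lambda$, and in particular $\lambda\neq 1$. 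The point of this remark is that $\sigma$ then stabilizes every subspace of $V$ and restricts to an order‑$p$ automorphism of any $P$‑submodule it does not centralize.

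Next I would invoke Maschke's theorem to write $V=U_1\oplus\dots\oplus U_k$ as a direct sum of irreducible $P$‑submodules, and choose $0\neq v_0\in V$ with $m:=\abs{P:\ce{P}{v_0}}$ minimal among all nonzero vectors. Writing $v_0=u_1+\dots+u_k$ with $u_i\in U_i$, relabel so that $u_1\neq 0$. Since each $U_i$ is $P$‑invariant, any $x$ fixing $v_0$ must fix each component $u_i$ separately, so $\ce{P}{v_0}\subseteq\ce{P}{u_1}$; hence $\abs{P:\ce{P}{u_1}}\leq m$, and by minimality $\abs{P:\ce{P}{u_1}}=m$.

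Now apply Lemma~\ref{lemma:action_on_minimal} to $U:=U_1$. This is an irreducible $P$‑module with $\ce{U}{P}=U\cap\ce{V}{P}=0$; the automorphism $\sigma$ stabilizes $U$, and $\sigma|_{U}$ has order $p$ (it is multiplication by $\lambda$, which cannot be trivial since $\lambda\neq 1$), while the relation $(u^x)^{\sigma}=(u^{\sigma})^x$ on $U$ is inherited from $V$. Lemma~\ref{lemma:action_on_minimal} therefore supplies $g\in P$ with $u^{\sigma}=u^g$ for every $u\in U$. Taking $v=u_1$ gives a nonzero vector with $v^{\sigma}=v^g$ and $\abs{P:\ce{P}{v}}=m=\min\{\abs{P:\ce{P}{w}}\mid 0\neq w\in V\}$, which is exactly the desired conclusion.

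The one place that needs care is the interaction between minimality and irreducibility: a priori the minimal index $m$ is attained only by $v_0$, which need not lie in an irreducible submodule, and an arbitrary automorphism $\sigma$ need not respect an arbitrary $P$‑submodule. The observation that $\sigma$ is scalar disposes of the second concern — it survives restriction to $U_1$ — and the coordinatewise description of $\ce{P}{v_0}$ with respect to the decomposition $V=\bigoplus U_i$ transfers the minimality to $u_1\in U_1$, disposing of the first. Everything else is bookkeeping.
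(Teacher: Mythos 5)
Your proof is correct and follows essentially the same route as the paper: reduce mod Maschke to an irreducible summand, transfer the minimality of $\abs{P:\ce{P}{v}}$ to a nonzero component of a minimal vector, and invoke Lemma~\ref{lemma:action_on_minimal}. Your extra observation that $\sigma$ is multiplication by a fixed scalar $\lambda\neq 1$ is a small refinement over the paper, which only notes that each summand is $\sigma$-invariant and that the restriction is trivial or of order $p$; your version rules out the trivial restriction cleanly, so the hypotheses of Lemma~\ref{lemma:action_on_minimal} are met without any case distinction.
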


\begin{proof}
At first, we recall that, since $V$ is a finite vector space, then it is a vector space over a field of finite characteristic $q$. From the fact that $\ce{V}{P}=0$, it follows that $q \neq p$.

Thus, by Maschke's Theorem, we can write $V$ as a sum of irreducible $P$-modules $V = W_1 \oplus ... \oplus W_n$. Let $v \in V$ such that $\abs{P:\ce{P}{v}} = \min\{ \abs{P:\ce{P}{w}} \mid 0 \neq w \in V \}$, we claim that $v$ can be chosen such that $v \in W_i$ for some $i \in \{1,...,n\}$. In fact, let $v = w_1 + ... + w_n$ for some uniquely determined $w_i \in W_i$, for each $i \in \{1,...,n\}$. Then, $\ce{P}{v} = \bigcap_{i=1}^n \ce{P}{w_i}$ and, by the minimality of $\abs{P:\ce{P}{v}}$, it follows that, for each $i \in \{1,...,n\}$, either $w_i=0$ or $\ce{P}{w_i}=\ce{P}{v}$. Since $v \neq 0$, then there exists $i \in \{1,...,n\}$ such that $w_i \neq 0$ and, if we replace $v$ with $w_i$, we prove the claim.

Thus, without loss of generality, we can claim that there exists $v \in W_1$ such that $\abs{P:\ce{P}{v}} = \min\{ \abs{P:\ce{P}{w}} \mid 0 \neq w \in V \}$. Moreover, notice that ${W_1}^{\sigma} \leq W_1$ and, thus, $\sigma_{\mid_{W_1}} \in \op{Aut}(W_1)$ and either $\sigma_{\mid_{W_1}} = 1$ or it has order $p$. Since $\ce{W_1}{P} \leq \ce{V}{P} = 0$ and $W_1$ is an irreducible $P$-module, it follows from Lemma~\ref{lemma:action_on_minimal} that there exists $g \in P$ such that $v^{\sigma}=v^g$.
\end{proof}

We now prove some preliminary results concerning character theory.

\begin{proposition}
\label{proposition:invariant_in_subgroup}
Let $G$ be a finite group, let $p$ be a prime number and let $\sigma \in \gal(\mathbb{Q}_{\abs{G}} / \mathbb{Q})$ such that $o(\sigma)$ is a power of $p$. Suppose $H \leq G$ such that $p \nmid \abs{G:H}$ and suppose, for some $\psi \in \irr(H)$, that $\psi(1)$ divides the degree of every irreducible constituent of $\psi^G$. If $\psi$ is fixed by $\sigma$, then $\sigma$ also fixes some irreducible constituents of $\psi^G$.
\end{proposition}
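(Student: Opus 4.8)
The plan is to count, modulo $p$, the irreducible constituents of $\psi^G$ in a $\sigma$-stable way. Since $\sigma$ has order a power of $p$, the cyclic group $\langle\sigma\rangle$ acts on the set $\mathrm{Irr}(\psi^G)$ of irreducible constituents of $\psi^G$, and the orbits have size a power of $p$; an orbit has size $1$ precisely when the constituent is $\sigma$-invariant. So it suffices to show that the number of constituents of $\psi^G$, counted appropriately, is not divisible by $p$ — then at least one orbit is a singleton.

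The natural quantity to look at is $[\psi^G, \psi^G] = [\psi, (\psi^G)_H]$, or better, the decomposition $\psi^G = \sum_{\chi} a_\chi \chi$ with $a_\chi = [\psi^G,\chi] = [\psi, \chi_H]$. Here the hypothesis that $\psi(1)$ divides $\chi(1)$ for every constituent $\chi$ enters: write $\chi(1)/\psi(1) = b_\chi$, a positive integer, and note that comparing degrees gives $\psi(1)\,\abs{G:H} = \sum_\chi a_\chi \chi(1) = \psi(1)\sum_\chi a_\chi b_\chi$, hence $\abs{G:H} = \sum_\chi a_\chi b_\chi$. Since $p\nmid\abs{G:H}$, the sum $\sum_\chi a_\chi b_\chi$ is coprime to $p$. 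I would like to conclude that some term $a_\chi b_\chi$ with $\chi$ $\sigma$-invariant is nonzero mod $p$; for that I need the contribution of each $\langle\sigma\rangle$-orbit of size $>1$ to be divisible by $p$. Within such an orbit $\{\chi, \chi^\sigma, \dots\}$ all members have the same multiplicity $a_\chi$ (since $\psi$ is $\sigma$-fixed, $a_{\chi^\sigma} = [\psi^\sigma, (\chi^\sigma)_H] = [\psi,\chi_H]^\sigma = a_\chi$) and the same degree, hence the same $b_\chi$; so the orbit contributes $(\text{orbit size})\cdot a_\chi b_\chi$, which is divisible by $p$. Therefore $\sum_{\chi \text{ $\sigma$-invariant}} a_\chi b_\chi \equiv \abs{G:H} \not\equiv 0 \pmod p$, so some $\sigma$-invariant $\chi$ has $a_\chi \neq 0$, i.e.\ is a constituent of $\psi^G$.

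The main obstacle is making sure $\sigma$ genuinely permutes $\mathrm{Irr}(\psi^G)$ and that the Galois action is compatible with induction and restriction — i.e.\ that $(\psi^G)^\sigma = (\psi^\sigma)^G = \psi^G$ and $(\chi_H)^\sigma = (\chi^\sigma)_H$. These are standard facts about the action of $\gal(\mathbb{Q}_{\abs G}\mid\mathbb Q)$ on characters (induction, restriction, and inner products all commute with the Galois action, because $\sigma$ acts on character values and these operations are $\mathbb Z$-linear combinations of values), so this is routine once stated carefully; the only point requiring a word is that $\sigma$ fixes $\psi^G$ as a (virtual) character because it fixes $\psi$. I would phrase the argument so that the orbit-counting is done on the multiset of constituents of $\psi^G$ weighted by $a_\chi b_\chi$, and the congruence $\sum a_\chi b_\chi = \abs{G:H}$ does the rest.
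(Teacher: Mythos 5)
Your argument is correct and is essentially the paper's own proof: both group the constituents of $\psi^G$ into $\langle\sigma\rangle$-orbits (of $p$-power size), compare degrees to get $\abs{G:H}=\sum a_\chi\,\chi(1)/\psi(1)$, and use $p\nmid\abs{G:H}$ together with the integrality of $\chi(1)/\psi(1)$ to force a singleton orbit. The only difference is cosmetic: you verify explicitly that multiplicities and degrees are constant on each orbit, which the paper uses implicitly by writing the sum over orbit representatives.
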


Notice that the hypothesis of $\psi(1)$ dividing the degree of every irreducible constituent of $\psi^G$ is automatically verified if $H$ is normal in $G$.

\begin{proof}
Since $\psi^{\sigma}=\psi$, then also $(\psi^G)^{\sigma}=\psi^G$ and, thus, $\sigma$ acts on the irreducible constituents of $\psi^G$. Let $C_1,...,C_k$ be the orbits of this action, then we have
$$ \abs{G:H}\psi(1) = \psi^G(1) = \sum_{i=1}^k \abs{C_i} n_i \chi_i(1), $$
where $\chi_i$ is a representative of $C_i$ for each $i \in \{1,...,k\}$, and $n_i$ is the multiplicity of $\chi_i$ in $\psi^G$. Notice that, since $o(\sigma)=p^a$ for some $a \in \mathbb{N}$, then the size of each orbit $\abs{C_i}$ is a power of $p$.

It follows that
$$ \abs{G:H} = \sum_{i=1}^k \abs{C_i} n_i \frac{\chi_i(1)}{\psi(1)} $$
and, since $p \nmid \abs{G:H}$ and the quotients $\frac{\chi_i(1)}{\psi(1)}$ are all integers, we have that $\abs{C_i}=1$ for some $i \in \{1,...,k\}$ and, thus, there exists some $\chi = \chi_i \in \irr(G \smid \psi)$ such that $\chi^{\sigma}=\chi$.
\end{proof}

\begin{lemma}
\label{lemma:normal_coprime}
Let $G$ be a finite group, let $N \lhd G$ and let $\phi \in \irr(N)$ such that $(\abs{I_G(\phi)/N},o(\phi)\phi(1))=1$. Let $\sigma \in \gal(\mathbb{Q}_{\abs{G}} / \mathbb{Q})$ and suppose there exists $g \in G$ such that $\phi^{\sigma} = \phi^g$. Then, there exists $\chi \in \irr(G \smid \phi)$ such that $\chi^{\sigma} = \chi$. Moreover, $\chi$ can be chosen such that $\chi(1) = \phi(1) \abs{G:I_G(\phi)}$.
\end{lemma}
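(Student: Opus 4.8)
The plan is to cut the problem down to the inertia subgroup $T = \op{I}_G(\phi)$, produce there a distinguished extension of $\phi$ that is forced to be invariant under a suitably twisted copy of $\sigma$, and then induce it back to $G$. The first point is that $g$ normalises $T$: Galois conjugation and $G$-conjugation commute on $\irr(N)$, in the sense that $(\phi^x)^\sigma = (\phi^\sigma)^x$ for all $x \in G$, so $\op{I}_G(\phi^\sigma) = \op{I}_G(\phi) = T$; on the other hand $\op{I}_G(\phi^g)$ is a $G$-conjugate of $T$, and since $\phi^\sigma = \phi^g$ these coincide, forcing $g \in \no{T}{G}$. Hence conjugation by $g$ is an automorphism of $T$ stabilising $N$, and the operation $\tau$ on $\irr(T)$ given by $\psi^\tau = (\psi^\sigma)^{g^{-1}}$ is a well-defined permutation of $\irr(T)$ (both $\sigma$-conjugation and $g$-conjugation permute $\irr(T)$, and they commute with each other). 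By construction $\phi^\tau = (\phi^g)^{g^{-1}} = \phi$, and $\tau$ commutes with restriction to $N$ and preserves both character degrees and determinantal orders, since each of $\sigma$-conjugation and $g^{-1}$-conjugation does.

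Next I would invoke the canonical extension of an invariant character in a coprime extension. Since $N \lhd T$, $(\abs{T/N},\abs{N}) = 1$ and $\phi$ is $T$-invariant, $\phi$ has a unique extension $\hat\phi \in \irr(T)$ with $o(\hat\phi) = o(\phi)$. As $\tau$ fixes $\phi$, commutes with restriction to $N$, and preserves degrees and determinantal orders, $\hat\phi^\tau$ is again an extension of $\phi$ to $T$ with $o(\hat\phi^\tau) = o(\phi)$, so uniqueness forces $\hat\phi^\tau = \hat\phi$; unwinding the definition of $\tau$, this reads $\hat\phi^\sigma = \hat\phi^g$.

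Finally I would put $\chi = \hat\phi^G$. Since $\hat\phi$ lies over $\phi$ and $T = \op{I}_G(\phi)$, the Clifford correspondence gives $\chi \in \irr(G \mid \phi)$ with $\chi(1) = \abs{G:T}\,\hat\phi(1) = \phi(1)\abs{G:\op{I}_G(\phi)}$, and $\chi^\sigma = (\hat\phi^\sigma)^G = (\hat\phi^g)^G = \hat\phi^G = \chi$, using that induction commutes with Galois conjugation and that inducing a $G$-conjugate of $\hat\phi$ from $T$ gives the same character of $G$; so this $\chi$ has all the required properties. I expect the only delicate points to be the bookkeeping of the first step — that $g$ normalises $T$ and that $\tau$ really is a symmetry of $\irr(T)$ fixing $\phi$ and compatible with everything characterising the canonical extension — together with correctly invoking the canonical-extension theorem, for which the coprimality $(\abs{T/N},\abs{N})=1$ is precisely the hypothesis needed.
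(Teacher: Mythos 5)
Your proposal is correct and follows essentially the same route as the paper: both arguments take the canonical extension $\hat\phi$ of $\phi$ to $T=\op{I}_G(\phi)$ with $o(\hat\phi)=o(\phi)$, use its uniqueness (after noting $g$ normalises $T$) to force $\hat\phi^{\sigma}=\hat\phi^{g}$, and then induce to $G$ via the Clifford correspondence. Your twisted operator $\tau$ is just a repackaging of the paper's direct uniqueness comparison of $\hat\phi^{\sigma}$ and $\hat\phi^{g}$ over $\phi^{\sigma}=\phi^{g}$.
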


\begin{proof}
Let $T=I_{G}(\phi)$ and notice that $T=I_{G}(\phi^{\sigma})$. Let $n = o(\phi)$, then by \cite[Corollary~6.28]{Isaacs} there exists a unique character $\hat{\phi} \in \irr(T \smid \phi)$ extending $\phi$ such that $o(\hat{\phi})=n$. Moreover, $\hat{\phi}^{\sigma}$ lies over $\phi^{\sigma}$ and, analogously, it is the unique character in $\irr(T \smid \phi^{\sigma})$ such that $o(\hat{\phi}^{\sigma})=n$. Since $\phi^g = \phi^{\sigma}$, then $T^g=T$ and $\hat{\phi}^g$ lies over $\phi^{\sigma}$; moreover, $o(\hat{\phi}^g) = o(\hat{\phi})=n$ and it follows by uniqueness that $\hat{\phi}^g = \hat{\phi}^{\sigma}$.

Let $\chi = \hat{\phi}^G \in \irr(G)$ and notice that $\chi(1) = \abs{G:T}\phi(1)$. Moreover, since $(\hat{\phi}^{\sigma})^G = \chi^{\sigma}$, we have that
$$ \chi = (\hat{\phi}^g)^G = (\hat{\phi}^{\sigma})^G = \chi^{\sigma}. \eqno\qedhere $$
\end{proof}

Finally, we cite a result from \cite{Navarro-Tiep:Rational_irreducible_characters}, which we will use to justify the fact that a non-abelian simple group always has a non-principal $\sigma$-invariant character, for any choice of $\sigma$.

\begin{theorem}[{\cite[Theorem 8.2]{Navarro-Tiep:Rational_irreducible_characters}}]
\label{theorem:rational_char}
Let $G$ be a finite group. If $G$ has exactly one irreducible rational character, then $G$ has odd order.
\end{theorem}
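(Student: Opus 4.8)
The plan is to prove the contrapositive: if $\abs{G}$ is even, then $G$ has a non-principal rational-valued irreducible character. (I would record in passing that the converse --- a group of odd order has only the trivial rational irreducible character --- is elementary and needs no Classification: a rational-valued character is real-valued, and in a group of odd order the only real conjugacy class is that of $1$, since $g$ conjugate to $g^{-1}$ forces $x^2\in\ce{G}{g}$ for the conjugating element $x$, hence $x\in\ce{G}{g}$ and $g=g^{-1}=1$; so by Brauer's permutation lemma $G$ has only one real-valued irreducible character, which is then the only rational one.) I would induct on $\abs{G}$. If $G$ had two distinct minimal normal subgroups $N_1,N_2$, then $N_1\cap N_2=1$, so $G$ embeds in $G/N_1\times G/N_2$; since each proper quotient of $G$ again has only the trivial rational irreducible character, induction would give $\abs{G/N_i}$ odd and hence $\abs G$ odd. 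So $G$ has a unique minimal normal subgroup $N$. If $N\neq G$, then $G/N$ has only the trivial rational irreducible character, so by induction $\abs{G/N}$ is odd; as $\abs G$ is even, $\abs N$ must be even, and it then suffices to reach a contradiction. (If $N=G$, then $G$ is a non-abelian simple group of even order --- the base case, dealt with below.) Finally I would write $N=S_1\times\dots\times S_k$ with the $S_i$ isomorphic to a fixed simple group $S$ of even order.

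Suppose first $S$ is abelian, so $S\cong C_2$ and $N$ is elementary abelian of $2$-power order. Since $\abs{G/N}$ is odd it is coprime to $\abs N$, so Schur--Zassenhaus gives $G=N\rtimes H$ with $H\cong G/N$. Choose a non-principal $\lambda\in\irr(N)$; as $\lambda^2=1_N$, $\lambda$ is rational-valued. Put $T=\op{I}_G(\lambda)$, so $N\lhd T$ with $\abs{T/N}$ odd, hence coprime to $o(\lambda)=2$; by \cite[Corollary~6.27]{Isaacs} there is a unique $\hat\lambda\in\irr(T\mid\lambda)$ extending $\lambda$ with $o(\hat\lambda)=2$, and this $\hat\lambda$ is therefore rational-valued. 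For any $\sigma\in\gal(\mathbb{Q}_{\abs G}\mid\mathbb{Q})$, the character $\hat\lambda^{\sigma}$ lies over $\lambda^{\sigma}=\lambda$, extends $\lambda$, and has $o(\hat\lambda^{\sigma})=o(\hat\lambda)=2$, so by uniqueness $\hat\lambda^{\sigma}=\hat\lambda$. Hence $\chi=\hat\lambda^{G}\in\irr(G)$ (Clifford correspondence) satisfies $\chi^{\sigma}=(\hat\lambda^{\sigma})^{G}=\chi$ for every $\sigma$, so $\chi$ is rational; and $\chi$ is non-principal because $1_N$ is not a constituent of $\chi_N$. This contradicts the hypothesis, so $S$ must be non-abelian.

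Here the Classification of Finite Simple Groups enters, and this is the step I expect to be hardest. The input I would use is: every finite non-abelian simple group $S$ has a non-principal rational-valued irreducible character invariant under $\op{Aut}(S)$ --- the Steinberg character for groups of Lie type, the $(n-1)$-dimensional constituent of the natural permutation character for alternating groups $A_n$ (handling $A_5,A_6$ via their realisations as $\op{PSL}(2,4)$, $\op{PSL}(2,9)$ and the Steinberg character there), and a direct inspection of the character table for the sporadic groups. Given such a $\theta\in\irr(S)$, I would set $\phi=\theta\times\dots\times\theta\in\irr(N)$: it is invariant under $\op{Aut}(N)\cong\op{Aut}(S)\wr\op{Sym}(k)$, hence $G$-invariant, it is rational-valued, and it is non-principal. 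It then suffices to produce a rational $\chi\in\irr(G\mid\phi)$, which is automatically non-principal. Since $N$ is perfect, $\det\phi=1_N$; when $\phi$ extends to $G$ every extension is $\hat\phi_0\beta$ with $\beta\in\lin(G/N)$, and $\det(\hat\phi_0\beta)=\det(\hat\phi_0)\cdot\beta^{\phi(1)}$ with $\det(\hat\phi_0)\in\lin(G/N)$; if $\phi(1)$ is coprime to $\abs{G/N}$ then $\beta\mapsto\beta^{\phi(1)}$ permutes the (odd-order) group $\lin(G/N)$, so there is a unique extension of $\phi$ of trivial determinant, and it is Galois-fixed by uniqueness exactly as in the abelian case. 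The remaining possibilities --- $\phi$ not extending to $G$, or $\phi(1)$ not coprime to $\abs{G/N}$ --- I would treat by analysing the Clifford obstruction of the character triple $(G,N,\phi)$ along a chief series of the solvable (Feit--Thompson) group $G/N$. Finally, in the base case $N=G$ the character $\theta$ is itself a non-principal rational irreducible character of the simple group $G$, the desired contradiction.

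To summarise the difficulties: the essential non-elementary ingredient is the Classification, used to guarantee a non-principal $\op{Aut}(S)$-invariant rational irreducible character for every finite simple group; already the weaker statement that every simple group of even order has such a character requires case-by-case verification, even though its failure for odd-order groups is for the trivial reason above. The second, more technical, point is the lifting from the minimal normal subgroup to the whole group: passing from the $G$-invariant rational character $\phi$ of $N$ to a rational character of $G$ is immediate when $\phi$ extends and its degree is coprime to $\abs{G/N}$, but in general demands control of the cohomological obstruction of $(G,N,\phi)$, which is precisely where the oddness of $\abs{G/N}$ is used.
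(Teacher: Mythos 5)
First, note that the paper itself does not prove this statement at all: it is quoted verbatim from \cite[Theorem 8.2]{Navarro-Tiep:Rational_irreducible_characters}, and the only thing used later is its immediate consequence (Corollary~\ref{corollary:simple_groups}) that a nonabelian simple group has a nontrivial rational character. So what you have attempted is to reprove a deep, CFSG-dependent theorem from the literature, and your outline does not close it. The reduction to a unique minimal normal subgroup $N$ with $\abs{G/N}$ odd, and the elementary abelian case (canonical extension of an order-$2$ linear character to its inertia group, Galois-fixed by uniqueness, then Clifford induction), are correct. The CFSG input you assert --- every nonabelian simple group $S$ has a nontrivial rational $\op{Aut}(S)$-invariant irreducible character --- is essentially right (the Steinberg character, etc.), but in your write-up it is an assertion requiring the very case-by-case verification that constitutes much of Navarro--Tiep's work.

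The genuine gap is the final lifting step in the nonabelian case: producing a rational member of $\irr(G \mid \phi)$ when $\phi$ does not extend to $G$, or when $\phi(1)$ is not coprime to $\abs{G/N}$. You defer this to ``analysing the Clifford obstruction of $(G,N,\phi)$ along a chief series of $G/N$'', but that is precisely the hard core of the theorem, and the difficulty is not only whether the class in $H^2(G/N,\mathbb{C}^{\times})$ vanishes: even when $\phi$ does extend, you must still exhibit an extension fixed by the \emph{entire} Galois group, and the uniqueness device you use elsewhere is available only under the coprimality hypotheses of \cite[Corollary~6.27]{Isaacs}, which you have discarded in this case. Nor can an orbit-counting argument in the style of Proposition~\ref{proposition:invariant_in_subgroup} of this paper rescue the step: such counting forces a fixed point only for a Galois element of prime-power order, whereas rationality demands invariance under the full group $\gal(\mathbb{Q}_{\abs{G}}\mid\mathbb{Q})$, whose order is not a $2$-power, so no parity argument applies. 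Navarro--Tiep circumvent exactly this issue by proving a stronger statement about simple groups (a nontrivial rational character that extends, with control of its field of values, to the relevant almost simple overgroups), which is where both the real work and the use of the Classification lie. As it stands, your text is a plausible plan whose decisive step is missing.
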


\section{Simple groups}

In this section, we will prove some properties of finite simple groups of Lie type, which then will lead to the proof of Theorem~\ref{result:simple_groups}. We will rely often to the Deligne-Lusztig theory, which connects the irreducible characters of a finite group of Lie type with the characters of the maximal tori of the group. A complete description of the theory can be found in \cite{Geck-Malle:Character_Theory}. Notice that most presentations of the Deligne-Lusztig theory prefer to consider semisimple elements of a dual group in place of the irreducible characters of the maximal tori. In the context of this paper, however, it is probably better to consider characters of subgroups, since it will make our arguments clearer.

Despite the use of the Deligne-Lusztig theory, however, some steps of our proofs still rely on the Classification of finite simple groups.

We first need to prove a result on characters of abelian groups, which then we will apply to the characters in maximal tori.

\begin{lemma}
\label{lemma:torus}
Let $p$ be an odd prime number, let $A$ be an abelian group and let $A_0 \leq A$ be cyclic of order $\ell^f - 1$, for some prime number $\ell$ and some positive integer $f$ such that $p \mid f$. Let $\sigma \in \gal(\mathbb{Q}_{\abs{A}} / \mathbb{Q})$ of order $p$ and let $\nu \in \op{Aut}(A)$ such that, for every $x \in A_0$, $\nu(x)=x^{\ell^{\nicefrac{f}{p}}}$. Then, there exists 
$\chi \in \irr(A)$ such that ${\chi_{A_0}}^{\nu} \neq \chi_{A_0}$, and such that either $\sigma$ fixes $\chi$ or $\chi^{\nu^m} = \chi^{\sigma}$ for some $m$ coprime with $p$.
\end{lemma}

\begin{proof}
Let $k = \ell^{\nicefrac{f}{p}}$, then $\abs{A_0}=k^p - 1$ and, since $p$ is odd, by Zsigmondy's Theorem we have that there exists a prime number $r$ which divides $\abs{A_0}$ and does not divide $k - 1$. Moreover, since $k^p \equiv_p k$, we have that $r \neq p$.

Consider now $\irr(A_0)$ and notice that it is an abelian group of order $k^p - 1$ and that, for any $\psi \in \irr(A_0)$, $\psi^{\nu} = \psi^k$. Now, fix $\theta \in \irr(A_0)$ of order $r$. Consider $V = \groupgen{\theta} \leq \irr(A_0)$ and notice that both $\nu$ and $\sigma$ act on $V$. Moreover, if $\psi \in V$ is fixed by $\nu$, then
$$ \psi^k = \psi \Longrightarrow \psi^{k-1} = 1 \Longrightarrow o(\psi) \mid k-1 $$
and it follows that $\psi = 1_{A_0}$. Suppose now that $\sigma$ does not fix $\theta$. Since, $\nu$ has order $p$ on $V$ (and on $A_0$ as well), $P=\groupgen{\nu}$ is a $p$-group and we can apply Lemma~\ref{lemma:action_on_minimal} to prove that $\theta$ is fixed by $\nu^m \times \sigma^{\shortminus 1} \in P \times \groupgen{\sigma}$, for some $m$ coprime with $p$.

Let $\rho \in P \times \groupgen{\sigma}$ such that $\rho = 1 \times \sigma$ if $\sigma$ fixes $\theta$, and $\rho = \nu^m \times \sigma^{\shortminus 1}$ otherwise. In both cases, $\rho$ has order $p$ and it fixes $\theta$. Since $\nu$ does not fix $\theta$, and since $\theta$ extends to $A$, we only need to prove that at least one of such extensions is fixed by $\rho$.

Let $A_0 \leq H \leq A$ such that $\abs{H/A_0}$ is the $r'$-part of $\abs{A/A_0}$. Then, $\theta$ has a canonical extension $\hat{\theta} \in \irr(H)$, which is fixed by $\rho$. Finally, since $p \nmid \abs{A:H}$, there exists some $\chi \in \irr(A)$ lying over $\hat{\theta}$ and fixed by $\rho$.
\end{proof}

We now need to state a rather simple property of semisimple characters. For a definition of semisimple characters, see \cite[Definition~2.6.9]{Geck-Malle:Character_Theory}.

\begin{lemma}
\label{lemma:semisimple_char}
Let $G = \mathbf{G}^F$ be a finite group of Lie type, for some Steinberg map $F$, and let $T=\mathbf{T}^F$ for some $F$-stable maximal torus $\mathbf{T} \leq \mathbf{G}$. Let $\theta \in \irr(T)$ and let $\chi \in \irr(G)$ be a semisimple character lying over $\theta$. Then,
\begin{itemize}
\item[a)] if $\nu \in \op{Aut}(G)$ normalizes $T$, then $\chi^{\nu}$ is a semisimple character lying over $\theta^{\nu}$;
\item[b)] if $\sigma \in \gal(\mathbb{Q}_{\abs{G}} / \mathbb{Q})$, then $\chi^{\sigma}$ is a semisimple character lying over $\theta^{\sigma}$.
\end{itemize}
\end{lemma}

\begin{proof}
It follows directly form the definition of semisimple character \cite[Definition~2.6.9]{Geck-Malle:Character_Theory}.
\end{proof}

If $S = \mathbf{S}^F$ is a finite group of Lie type, it is always possible to find a finite group $S \leq \tilde{S}$ such that $\tilde{S} = \mathbf{\tilde{S}}^{\tilde{F}}$, for some $\mathbf{S} \leq \mathbf{\tilde{S}}$ with connected centre and some Steinberg map $\tilde{F}$ such that $\tilde{F} = F$ on $\mathbf{S}$ (see \cite[Lemma~1.7.3]{Geck-Malle:Character_Theory}). With a little abuse of terminology, we may say that $\tilde{S}$ is a \emph{regular embedding} for $S$.

\begin{theorem}
\label{theorem:groups_Lie_type}
Let $S = \mathbf{S}^F$ be a finite group of Lie type, for some Steinberg map $F$, let $\tau$ be a field automorphism for $S$ of order $p^m$, for some prime number $p$, let $S < G \leq \op{Aut}(S)$ such that $G/S$ is a $p$-group and $\tau \in G$, and let $\sigma \in \gal(\mathbb{Q}_{\abs{G}} / \mathbb{Q})$ of order $p$.

Let $S \leq \tilde{S} \leq \op{Aut}(S)$ be a regular embedding for $S$. Suppose that $G/G\cap \tilde{S}$ is generated by the image of $\tau$ by the canonical projection, and that $p \nmid \abs{\tilde{S} : \tilde{S} \cap G}$. Then, there exists $\chi \in \irr(G)$ such that $\chi$ is $\sigma$-invariant and $p \mid \chi(1)$.
\end{theorem}

Notice that, if $S$ is simple, then the construction of \cite[Lemma~1.7.3]{Geck-Malle:Character_Theory} implies that $\ce{\tilde{S}}{S} = 1$. Thus, we can ask that $\tilde{S} \leq \op{Aut}(S)$. Moreover, as a consequence of \cite[Lemma~1.7.8]{Geck-Malle:Character_Theory}, $\abs{\tilde{S} : \tilde{S} \cap G}$ is deduced from the Classification of finite simple groups, for any simple group of Lie type $S$.

\begin{proof}
Suppose that $S$ is defined over a field of $q=\ell^f$ elements and let $\mathbf{T} \leq \mathbf{S}$ be an $F$-stable maximal torus such that $T = \mathbf{T}^F$ has a subgroup of order $q - 1$ (it is always possible to find such torus, see for instance the classification in \cite{Wilson:Finite_Simple_Groups}). By \cite[Lemma~1.7.7]{Geck-Malle:Character_Theory}, there exists an $\tilde{F}$-stable maximal torus $\mathbf{T} \leq \mathbf{\tilde{T}}$ of $\mathbf{\tilde{S}}$ such that, if $\tilde{T} = \mathbf{\tilde{T}}^{\tilde{F}}$, then $\tilde{S} = \tilde{T}S$ and $T = \tilde{T} \cap S$.

Let $\nu = \tau^{p^{m-1}}$, so that it has order $p$, and notice that $\nu(x) = x^{\ell^{\nicefrac{f}{p}}}$ for all $x \in T$. Moreover, since $p \nmid \abs{\tilde{S} : \tilde{S} \cap G}$, we can extend $\sigma$ to $\mathbb{Q}_{\abs{\tilde{S} : \tilde{S} \cap G} \abs{G}}$ and we still have an automorphism of order $p$ (we shall still call it $\sigma$). By Lemma~\ref{lemma:torus}, there exists $\theta \in \irr(\tilde{T})$ such that ${\theta_T}^{\nu} \neq \theta_T$ and $\theta$ is fixed either by $\sigma$ or by $\nu^m \times \sigma^{\shortminus 1}$, for some positive integer $m$ coprime with $p$.

Let $\rho \in P \times \groupgen{\sigma}$ such that $\theta^{\rho} = \theta$ and either $\rho = 1 \times \sigma$ or $\rho = \nu^m \times \sigma^{\shortminus 1}$, and let $\tilde{\psi} \in \irr(\tilde{S})$ be a semisimple character lying over $\theta$. By Lemma~\ref{lemma:semisimple_char}, $\tilde{\psi}^{\rho}$ is a semisimple character lying over $\theta^{\rho}=\theta$ and, since $\mathbf{\tilde{S}}$ has connected centre, by \cite[2.6.10]{Geck-Malle:Character_Theory} we have that $\tilde{\psi}^{\rho} = \tilde{\psi}$. Since $p \nmid \abs{\tilde{S} : \tilde{S} \cap G}$ (and since $\tilde{S} / S$ is abelian, because $\tilde{S} \leq \op{Aut}(S)$), we have that there exists at least one irreducible constituent $\psi$ of $\tilde{\psi}_{\tilde{S} \cap G}$ which is fixed by $\rho$. Moreover, it follows from \cite[Corollary~2.6.18]{Geck-Malle:Character_Theory} that all the irreducible constituents of $\psi_S$ are semisimple characters lying over $\theta_T \in \irr(T)$.

Now, notice that $\theta_T, {\theta_T}^{\nu} \in \irr(T)$ are not conjugated in $S$, since otherwise the Frattini argument would imply that $\nu \in I_G(\theta_T) S$ and, thus, $\nu \in I_G(\theta_T)$. Since the irreducible constituents of ${\psi^{\nu}}_S$ are semisimple characters lying over ${\theta_T}^{\nu}$, it follows from \cite[Theorem~2.6.2]{Geck-Malle:Character_Theory} that $({\psi^{\nu}}_S,\psi_S) = 0$ and, in particular, that $\psi$ is not fixes by $\nu$. Since, however, $\nu$ is contained in every proper subgroup of $\groupgen{\tau}$, we have that $I_G(\psi) = G \cap \tilde{S}$, and it finally follows from Lemma~\ref{lemma:normal_coprime} that $\chi = \psi^G \in \irr(G)$ is $\sigma$-invariant and $p \mid \chi(1)$.
\end{proof}

\begin{proof}[Proof of Theorem \ref{result:simple_groups}]
It follows from the Classification of finite simple groups that, in all the listed situations, $p$, $S$ and $G$ satisfy the conditions of Theorem~\ref{theorem:groups_Lie_type}. The existence of the group $\tilde{S}$ for Theorem~\ref{theorem:groups_Lie_type}, on the other hand, is guaranteed by \cite[Lemma~1.7.3]{Geck-Malle:Character_Theory}.
\end{proof}

\section{Proof of Theorem~\ref{result:p-solvable_groups} and Theorem~\ref{result:general_case}}

We are now ready to prove Theorem~\ref{result:p-solvable_groups} and Theorem~\ref{result:general_case}.

\begin{proof}[Proof of Theorem \ref{result:p-solvable_groups}]
We prove the theorem by induction on $\abs{G}$. Let $P$ be a Sylow $p$-subgroup of $G$ and suppose $P>1$, since otherwise the theorem follows trivially. We proceed by the following steps.

\textbf{Step 1.} We can assume $\rad{p}{G}=1$ and $\rup{p'}{G}=G$, otherwise the thesis follows by induction.

If $\rad{p}{G}>1$, then $G/\rad{p}{G}$ has a normal Sylow $p$-subgroup $P/\rad{p}{G}$ and, thus, $P$ is normal in $G$. Moreover, if $O=\rup{p'}{G}$ and $\psi \in \irr(O)$ is $\sigma$-invariant, then by Proposition~\ref{proposition:invariant_in_subgroup} there exists $\chi \in \irr(G \smid \psi)$ which is $\sigma$-invariant and, since $p \nmid \chi(1)$ by hypothesis, we have that $p \nmid \psi(1)$. It follows that $p$ does not divide the degree of any $\sigma$-invariant irreducible character of $O$. If $O < G$, it follows by induction that $P$ is a normal Sylow $p$-subgroup of $O$ and, therefore, $P$ is normal in $G$.

\textbf{Step 2.} Let $N$ be a minimal normal subgroup of $G$. Then, $N$ is a $p'$-group and, by induction, $PN/N \lhd G/N$. It follows that $PN=\rup{p'}{G}=G$.

Moreover, one may notice that $C=\ce{G}{N} \lhd G$ and, if $P_0 \in \syl{p}{C}$, then $P_0 = \rad{p}{C} \leq \rad{p}{G} = 1$. It follows that $\ce{P}{N} = P \cap C = 1$, and either $C=N$ and $N$ is abelian, or $C=1$.

\textbf{Step 3.} If $N$ is non-abelian, the thesis follows.

Suppose $N$ is non-abelian. Then, it is a direct product of non-abelian simple groups which are transitively permuted by $P$. Write $N=S_1 \times ... \times S_n$, where each $S_i$ is simple non-abelian. Write $S=S_1$ and let $H=\no{G}{S}$. Let $1_s \neq \delta \in \irr(S)$ be a rational character (see Theorem~\ref{theorem:rational_char}), let $\psi \in \irr(N)$ be defined as
$$ \psi = \delta \otimes 1_{S_2} \otimes ... \otimes 1_{S_n} $$
and notice that $\psi$ is $\sigma$-invariant, $I_G(\psi) \leq H$ and $(\abs{H/N},o(\psi)\psi(1)) = 1$. Thus, it follows from Lemma~\ref{lemma:normal_coprime} and from our hypothesis that $G=H=I_G(\psi)$, $N=S$ and, since $\ce{P}{N} = 1$, we have that $S < G \leq \op{Aut}(S)$. Since $p \nmid \abs{S}$, then $p$ must be odd and it follows from the Classification of finite simple groups that $S$ is a simple group of Lie type. Then, the thesis follows from Theorem~\ref{result:simple_groups}.

\textbf{Step 4.} The thesis follows also if $N$ is abelian.

Assume $N$ to be abelian, then it is an elementary abelian $q$-group for some $q \neq p$. Let $V=\irr(N)$, then $V$ is a finite vector space over a field of characteristic $q$; since $G=PN$ and $N$ is a minimal normal subgroup of $G$, $V$ is also an irreducible $P$-module, with $P$ acting on $V$ by conjugation. Since $\ce{P}{N}=1$ and $N$ is minimal, then also $\ce{N}{P}=1$ and $\ce{V}{P}=1_N$. Moreover, the action of $\sigma$ on $V$ commutes with the action of $P$. Then, by Lemma~\ref{lemma:action_on_minimal}, there exist $g \in P$ and $\lambda \in V=\irr(N)$ such that $\lambda^{\sigma}=\lambda^g$ and, by Lemma~\ref{lemma:normal_coprime}, there exists $\chi \in \irr(G)$ such that $\chi^{\sigma}=\chi$ and $\chi(1)=\abs{G:I_G(\lambda)}$. Since by hypothesis $p \nmid \chi(1)$, it follows that $G = I_G(\lambda)$ and, therefore, there exists some $1 \neq x \in N$ such that $\ce{G}{x} = G$. It follows that $x \in \ze{G} \cap N$ and, being $N$ a minimal normal subgroup of $G$, $N \leq \ze{G}$ and $G=P \times N$. Thus, the thesis follows also in this case.
\end{proof}

\begin{proof}[Proof of Theorem \ref{result:general_case}]
We prove the theorem by induction on $\abs{G}$. Let $N=\no{G}{P}$ and let $M/K$ be a chief factor of $G$, by induction we can assume that $K=1$ and, thus, that $M$ is a minimal normal subgroup of $G$. If $p \mid \abs{M}$, then $1 < P \cap M \leq N \cap M$ and the theorem is proved in this case. Thus, we can assume $M$ to be a $p'$-group.

Notice that, in this situation, $N \cap M = \ce{M}{P}$. Therefore, we need to prove that $C=\ce{M}{P} > 1$. We prove it both when $M$ is a non-abelian minimal normal subgroup and when $M$ is abelian.

\textbf{Case 1.} $M$ is non-abelian.

Suppose $M$ is a non-abelian $p'$-subgroup. Then, it is a direct product of non-abelian simple groups which are transitively permuted by $G$. Write $M=S_1 \times ... \times S_n$, where each $S_i$ is simple non-abelian, and write $S=S_1$. Let $1_S \neq \delta \in \irr(S)$ be a rational character (see Theorem~\ref{theorem:rational_char}). Let
$$\psi = \delta \otimes 1_{S_2} \otimes ... \otimes 1_{S_n} \in \irr(M)$$
and let $T=I_G(\psi)$. Let $P_0 = P \cap T$; by eventually replacing $\psi$ with some $G$-conjugate, we can assume that $P_0$ is a Sylow $p$-subgroup for $T$.

Let $\hat{\psi} \in \irr(P_0M \mid \psi)$ be the canonical extension of $\psi$ to $P_0M$, then $\hat{\psi}$ is $\sigma$-invariant and $\hat{\psi}(1)=\psi(1)$ divides the degree of every irreducible constituent of $\hat{\psi}^T$. Since $p \nmid \abs{T:P_0M}$, it follows from Proposition~\ref{proposition:invariant_in_subgroup} that there exists $\theta \in \irr(T)$ lying over $\hat{\psi}$, and thus lying also over $\psi$, such that $\theta^{\sigma}=\theta$. If $\chi = \theta^G \in \irr(G)$, then
$$ \chi^{\sigma} = (\theta^G)^{\sigma} = (\theta^{\sigma})^G = \theta^G = \chi $$
and, by hypothesis, $p \nmid \chi(1) = \abs{G:T}\theta(1)$. It follows that $p \nmid \abs{G:T}$ and, thus, $P \leq T$ and $\psi \in \irr_P(M)$, with $\irr_P(M)$ being the set of the $P$-invariant irreducible characters of $M$. By Glauberman correspondence, see \cite[Theorem~13.1]{Isaacs}, $\abs{\irr(C)} = \abs{\irr_P(M)} > 1$ and it follows that also $C > 1$.

\textbf{Case 2.} $M$ is abelian.

Assume $M$ to be abelian, then it is an elementary abelian $q$-group for some $q \neq p$. Let $V=\irr(M)$, $V$ has the structure of a finite vector space over a field of characteristic $q$ and it is also a $P$-module, with $P$ acting on $V$ by conjugation. If we suppose $C = \ce{M}{P}=1$, then also $\ce{V}{P}=1_M$. Moreover, the action of $\sigma$ on $V$ commutes with the action of $P$ and, for each $\xi \in V$, $\ker(\xi)=\ker(\xi^{\sigma})$ and, therefore, $\xi^{\sigma} \in \groupgen{\xi}$. By Lemma~\ref{lemma:action_on_space}, there exist $g \in P$ and $\lambda \in V$ such that $\lambda^{\sigma}=\lambda^g$. Moreover, we can choose $\lambda$ such that $\abs{P : I_P(\lambda)} = \min \{ \abs{P : I_P(\xi)} \mid 1_M \neq \xi \in \irr(M) \}$ and, in particular, it follows that $\abs{P : I_P(\lambda)} \mid \theta(1)$ for every $\theta \in \irr(PM)$ such that $M \nleq \ker(\theta)$.

Now, by Lemma~\ref{lemma:normal_coprime}, there exists $\eta \in \irr(PM)$ lying over $\lambda$ such that $\eta^{\sigma}=\eta$ and $\eta(1)=\abs{P : I_P(\lambda)}$. Let $\psi$ be an irreducible constituent of $\eta^G$, then $\psi_{PM} = n_1 \theta_1 + ... + n_r \theta_r$ for some $n_i \in \mathbb{N}$ and some $\theta_i \in \irr(PM)$ such that, for each $i \in \{1,...,r\}$, $M \nleq \ker(\theta_i)$, since $M \nleq \ker(\psi)$ and $M$ is normal in $G$. It follows from the previous paragraph that $\eta(1)$ divides the degree of every $\theta_i$, for each $i \in \{1,...,r\}$, and, thus, $\eta(1) \mid \psi(1)$.

Thus, $\eta(1)$ divides the degree of every irreducible constituent of $\eta^G$ and, since $p \nmid \abs{G:PM}$, it follows from Proposition~\ref{proposition:invariant_in_subgroup} that $\sigma$ fixes some irreducible constituent $\chi$ of $\eta^G$. By hypothesis, $p \nmid \chi(1)$ and, thus, $p \nmid \eta(1)=\abs{P : I_P(\lambda)}$. It follows that $1_M \neq \lambda \in \irr_P(M)$ and, again by Glauberman correspondence, $\abs{\irr(C)} = \abs{\irr_P(M)} > 1$ and, thus, $C > 1$.
\end{proof}

\begin{center}
\subsection*{Acknowledgements}
\end{center}

\noindent
The author thanks professor P. H. Tiep for the precious conversation while preparing the first version of this paper.

\noindent
The author is also grateful to professor M. A. Pellegrini for his help and support during the last phase of research on the topic here exposed.

\bigskip


\end{document}